\theoremstyle{plain}
\newtheorem{theorem}{Theorem}[section]
\newtheorem{corollary}[theorem]{Corollary}
\newtheorem{lemma}[theorem]{Lemma}
\newtheorem{proposition}[theorem]{Proposition}
\newtheorem{problem}[theorem]{Problem}
\theoremstyle{definition}
\newtheorem{definition}[theorem]{Definition}
\newtheorem{example}[theorem]{Example}
\theoremstyle{remark}
\newtheorem{remark}[theorem]{Remark}
\newcommand{\A}{\mathcal{A}}
\newcommand{\scB}{\mathcal{B}}
\newcommand{\C}{\mathbb{C}}
\newcommand{\F}{\mathbb{F}}
\newcommand{\scF}{\mathcal{F}}
\newcommand{\scL}{\mathcal{L}}
\newcommand{\R}{\mathbb{R}}
\newcommand{\G}{\mathcal{G}}
\newcommand{\scS}{\mathcal{S}}
\newcommand{\scT}{\mathcal{T}}
\newcommand{\scP}{\mathcal{P}}
\newcommand{\Z}{\mathbb{Z}}
\newcommand{\scH}{{\mathcal{H}}}
\newcommand{\M}{\mathcal{M}}
\newcommand{\lcm}{\operatorname{lcm}}
\newcommand{\Hom}{\operatorname{Hom}}
\newcommand{\quasi}{\operatorname{quasi}}
\renewcommand{\part}{\operatorname{par}}
\newcommand{\tor}{\operatorname{tor}}
\newcolumntype{K}[1]{>{\centering\arraybackslash}p{#1}}
\begin{document}

\title[An equivalent formulation of chromatic quasi-polynomials]{An equivalent formulation of chromatic quasi-polynomials}

\date{\today}

\begin{abstract}
Given a central integral arrangement, the reduction of the arrangement modulo positive integers $q$ gives rise to a subgroup arrangement in $(\mathbb{Z}/q\mathbb{Z})^\ell$.
Kamiya-Takemura-Terao (2008) introduced the notion of characteristic quasi-polynomials, which uses to evaluate the cardinality of the complement of the subgroup arrangement.
Chen-Wang (2012) found a similar but more general setting that replacing the integral arrangement by its restriction to a subspace of $\mathbb{R}^\ell$, and evaluating the cardinality of the $q$-reduction complement will also lead to a quasi-polynomial in $q$. 
On an independent study, Br{\"a}nd{\'e}n-Moci (2014) defined the so-called chromatic quasi-polynomial, and initiated the study of $q$-colorings on a finite list of elements in a finitely generated abelian group. 
The main purpose of this paper is to verify that the Chen-Wang's quasi-polynomial and the Br{\"a}nd{\'e}n-Moci's chromatic quasi-polynomial are equivalent in the sense that the quasi-polynomials enumerate the cardinalities of isomorphic sets.
 \end{abstract}

\author{Tan Nhat Tran}
\address{Tan Nhat Tran, Department of Mathematics, Hokkaido University, Kita 10, Nishi 8, Kita-Ku, Sapporo 060-0810, Japan.}
\email{trannhattan@math.sci.hokudai.ac.jp}

%\dedicatory{}

\subjclass[2010]{05A19 (Primary), 52C35 (Secondary)}

\keywords{$G$-Tutte polynomial, chromatic quasi-polynomial, characteristic quasi-polynomial}

\date{\today}
\maketitle

%\tableofcontents

\section{Introduction}
\label{sec:intro}
 \textbf{Background}.
In the simplest setting, when a finite list $\A$ of integer vectors in $\Z^\ell$ is given, we may naturally associate to it a central hyperplane arrangement $\A(\R)$ in $\R^\ell$, which we call \emph{integral arrangement}. 
The study of a hyperplane arrangement typically goes along with the study of its \emph{characteristic polynomial} as the polynomial carries combinatorial and topological information of the arrangement (e.g., \cite{OS80}). 
In this paper, we are mainly interested in an arithmetical method, generally known as ``finite field method", for studying the integral arrangements. 
The method probably was first initiated by \cite{CR70} and developed into a systematic tool by Athanasiadis \cite{A96}, after closely related techniques have been used by Bj{\"o}rner-Ekedahl \cite{BE97}, and Blass-Sagan \cite{BS98} to solve problems related to subspace arrangements. 
Roughly speaking, suppose that the integral arrangement $\A(\R)$ associated to the list $\A$ is given, we can take coefficients modulo a positive integer $q$ and get an arrangement $\A(\Z/q\Z)$ of subgroups in $(\Z/q\Z)^\ell$. 
One of the reasons why the method is regarded as the ``finite field method" presumably comes from one of the most well-known and fundamental results in the theory. 
It states that when $q$ is a sufficiently large prime, the arrangement $\A(\Z/q\Z)$ now is defined over the finite field $\F_q$, and the cardinality of its complement coincides with $\chi_{\A(\R)}(q)$, the evaluation of the characteristic polynomial $\chi_{\A(\R)}(t)$ of $\A(\R)$ at $q$ (e.g., \cite[Theorem 2.2]{A96}). 

The fundamental theorem mentioned above is efficiently applicable to compute the characteristic polynomials of several arrangements arising from root systems (e.g., \cite{A96}).  
Kamiya-Takemura-Terao \cite{KTT08} showed that the cardinality of the complement is actually a quasi-polynomial in $q$, and named this the \emph{characteristic quasi-polynomial} of $\A$ as its $1$-constituent is identical with $\chi_{\A(\R)}(t)$. 
Chen-Wang \cite{CW12} considered the restriction of the integral arrangement to a subspace of $\R^\ell$, and proved a stronger result that after taking reduction modulo $q$ of the restricted arrangement, the cardinality of the complement of is also a quasi-polynomial in $q$. 
Later on, Yoshinaga (\cite{Y16}, \cite{Yo16}) extended the analysis on the deformations of root system arrangements and enhanced the calculation of the characteristic quasi-polynomials via the connection with Ehrhart quasi-polynomials.

In yet another consideration, given a finite list $\A$ in $\Z^\ell$, we can also associate it a \emph{toric arrangement} $\A(G)$ in the torus $G^\ell$ with $G$ is ${\Bbb S}^1$ or $\C^\times$ (e.g.,  \cite{DP05}, \cite{L12}). Although we are concerned with the subtori (hypersurfaces) instead of the hyperplanes, the ``finite field method" remains alive and is formulated in various ways (e.g., \cite{Law11}, \cite{ERS09}, \cite{ACH15}). 
To compute and even to make a broader understanding the characteristic polynomial, an arithmetical generalization of the ordinary \emph{Tutte polynomial} \cite{T54}, the \emph{arithmetic} Tutte polynomial was introduced \cite{L12}. 
These polynomials are currently receiving increasing attention (e.g., \cite{DM13}, \cite{FM16}). 
Br{\"a}nd{\'e}n-Moci \cite{BM14} defined the \emph{Tutte quasi-polynomial} associated to a finite list of elements in a finitely generated abelian group. 
This quasi-polynomial not only produces the interpolation between the Tutte polynomial and the arithmetic Tutte polynomial but also gives rise to \emph{chromatic quasi-polynomial} and \emph{flow quasi-polynomial}. 
These are group-theoretical counterparts of the graphic chromatic and flow polynomials which proved to have an application to colorings and flows on CW complexes \cite{DM16}.

 \textbf{Our result}. 
 A newly introduced notion of  \emph{$G$-Tutte polynomials} \cite{LTY17} establishes a common generalization of several ``Tutte-like" polynomials including all of the (quasi-)polynomials mentioned previously. 
In particular, the notion of $G$-Tutte polynomials is useful that enables us to unify the objects. 
We take the advantage to get the result that the Chen-Wang's quasi-polynomial and the Br{\"a}nd{\'e}n-Moci's chromatic quasi-polynomial are ``the same" in the sense that any Chen-Wang's quasi-polynomial is a chromatic quasi-polynomial and vice versa (equalities \eqref{eq:CW} and \eqref{eq:CW=BM}).

 \textbf{Organization of the paper}. 
The remainder of the paper is organized as follows. 
In Section \ref{sec:background}, we recall definitions of the characteristic, Chen-Wang's quasi-polynomials (\S\ref{subsec:CQP}), the chromatic quasi-polynomials (\S\ref{subsec:ChQP}) and briefly recall the motivations of why they have been defined. 
In Section \ref{sec:unify}, after recalling the basic facts of $G$-Tutte polynomials, we show the equivalence of the chromatic quasi-polynomials and the Chen-Wang's quasi-polynomials (equalities \eqref{eq:CW} and \eqref{eq:CW=BM}).
In \S\ref{subsec:Chromatic}, we give a Deletion-Contraction formula (Theorem \ref{thm:DC-formula}) for the chromatic quasi-polynomials. 
By using the language of chromatic quasi-polynomials, we also give a discussion to a problem asked by Chen-Wang (Problem \ref{pro:CW}).
In Section \ref{sec:real-arr}, we generalize the fundamental theorem in the primary ``finite field method" applied to any $\R$-plexification. 
That is, the $1$-constituent of the chromatic quasi-polynomial of a list $\A$ either is $0$ or agrees with the characteristic polynomial of the corresponding $\R$-plexification $\A(\R)$, and the characteristic polynomial of any $\R$-plexification $\A(\R)$ can be computed by the $1$-constituent of the chromatic quasi-polynomial of the deletion list of $\A$ by the list of its torsion elements (Theorem \ref{thm:quasi-general} and Proposition \ref{prop:non-torsion}).

\medskip
%******************************************************************************************************************************

\section{Preliminaries} 
\label{sec:background}
Let us first fix some definitions and notations throughout the paper. 

A function $f: \Z \to \Z$ is called a \emph{quasi-polynomial} if there exist $\rho\in\Z_{>0}$ and polynomials $g^k(t)\in\Z[t]$ ($1 \le k \le \rho$) such that for any $q\in\Z_{>0}$, 
\begin{equation*}
f(q) =g^k(q), 
\end{equation*}
when $q\equiv k\bmod \rho$. The number $\rho$ is called a period and the polynomial $g^k(t)$ is called the \emph{$k$-constituent} of $f(q)$. 

Let $\Gamma$ be a finitely generated abelian group, and
let $\A\subseteq\Gamma$ be a finite list (multiset) of elements in $\Gamma$. 
We will use the term \emph{pair} $(\A, \Gamma)$ to refer to these objects. 

For each sublist $\scS\subseteq \Gamma$, we denote by $r_\scS$ the \emph{rank} (as an abelian group)
of the subgroup $\langle\scS\rangle\le \Gamma$ generated by $\scS$. 
By the Structure Theorem, we may write $\Gamma/\langle\scS\rangle\simeq\bigoplus_{i=1}^{n_{\scS}}\Z/d_{\scS, i}\Z\oplus\Z^{r_{\Gamma}-r_{\scS}}$ where $n_{\scS}\geq 0$ and $1<d_{\scS, i}|d_{\scS, i+1}$. 
The \emph{LCM-period} $\rho_{\A}$ of $\A$ is defined by 
\begin{equation*}
\label{eq:LCM-period}
\rho_\A:=\lcm(d_{\scS, n_{\scS}}\mid\scS\subseteq \A). 
\end{equation*} 

Given a group $K$, denote by $K_{\tor}$ the torsion subgroup of $K$. Denote 
$$\scS^{\tor}:= \scS \cap \Gamma_{\tor} .$$

We are going to investigate one of typical problems in enumerative combinatorics: counting the sizes of sets which depend on positive integers $q$. 
Our motivated example is the graphic \emph{chromatic polynomials}.
Let $\G = (V,E)$ be a graph. 
Enumerating the set  $c_{\G}(q)$ of all proper $q$-colorings, i.e., labelings $\textbf{x} \in \{1,\ldots, q\}^{\#V}$ such that adjacent vertices get different labels: if $(ij) \in E$ then $x_i \ne x_j$, gives rise to a polynomial in $q$. 
The polynomial $c_{\G}(q)$ is broadly known as the chromatic polynomial of $\G$,  going back to Birkhoff and Whitney.
More generally, it happens quite often that enumerating the cardinalities of sets will lead to quasi-polynomials. 
One of the most famous examples is that given a rational polytope $\scP \subseteq \R^d$, the function $\#(q\scP \cap \Z^d)$ for $q\in\Z_{>0}$ agrees with a quasi-polynomial, called the \emph{Ehrhart quasi-polynomial}.
%******************************************************************************************************************************

\subsection{Characteristic and Chen-Wang's quasi-polynomials} 
\label{subsec:CQP}
Our next and important example that a quasi-polynomial appears in the counting problem list is that of characteristic quasi-polynomials. 
We will define the characteristic quasi-polynomials in a slightly different language to what has been stated in the Introduction part. 
For instance, the arrangement $\A(\Z/q\Z)$ called  \emph{$q$-reduction arrangement} in \cite{KTT08} and its complement will not appear here but later in Section \ref{subsec:G-Tutte} after invoking the notion of $G$-plexifications.
We specify $\Gamma=\Z^\ell$. 
Let $q\in\Z_{>0}$, and set $(\Z/q\Z)^\times := (\Z/q\Z)\smallsetminus\{\overline0\}$.
For simplicity of notation, we use the same symbols $\A$ and $\textbf{z}$ for the realizations of the list $\A\subseteq\Z^\ell$ and the element $\textbf{z} \in  (\Z/q\Z)^\ell $ as matrices of size $\ell \times \#\A$ and $1 \times \ell$, respectively. 
Denote
$${\mathrm{KTT}}(\A, \Z^\ell; q):= \{ \textbf{z} \in  (\Z/q\Z)^\ell \mid \textbf{z}\cdot\A \in ((\Z/q\Z)^\times)^{\#\A}\}.$$
We agree that ${\mathrm{KTT}}(\emptyset, \Z^\ell; q) = (\Z/q\Z)^\ell$, thought of as no constraints on $\textbf{z}$.
Kamiya-Takemura-Terao \cite{KTT08} produced two different methods with one of them relies on the theory of Ehrhart quasi-polynomials to show that $\#{\mathrm{KTT}}(\A, \Z^\ell; q)$ is a monic quasi-polynomial in $q$ with a period $\rho_{\A}$.  
The quasi-polynomial is called the \emph{characteristic quasi-polynomial} of $\A$. 
The name really explains the main reason of why the quasi-polynomial was introduced as its generality influences the study of real hyperplane arrangements.
That is, its $1$-constituent coincides with the characteristic polynomial (e.g., \cite[Definition 2.52]{OT92}) of the hyperplane arrangement $\{H_{\alpha} \mid \alpha  \in \A\}$ in $\R^\ell$ with $H_{\alpha}$ is the hyperplane orthogonal to $\alpha$ (e.g.,  \cite{A96}, \cite{KTT08}).

Let $\scB$ be another finite list in $\Z^\ell$. 
Chen-Wang  \cite{CW12}  considered a more general setting 
\begin{equation*}
\label{eq:CW-def}
{\mathrm{CW}}(\A, \scB, \Z^\ell; q):=\left\{
\textbf{z}\in  (\Z/q\Z)^\ell
\left|
\begin{array}{cc}
\textbf{z}\cdot \A \in ((\Z/q\Z)^\times)^{\#\A} \\
\textbf{z}\cdot \scB = (\overline0)^{\#\scB}
\end{array}
\right.
\right\}, 
\end{equation*}
and applied the elementary divisor method of \cite{KTT08} to show that the cardinality $\#{\mathrm{CW}}(\A, \scB, \Z^\ell; q)$ is also a quasi-polynomial in $q$. 
The notion of Chen-Wang's quasi-polynomials strictly generalizes that of characteristic quasi-polynomials because ${\mathrm{KTT}}(\A, \Z^\ell;q)={\mathrm{CW}}(\A, \scB, \Z^\ell; q)$ when $\scB$ is the zero matrix, and $\#{\mathrm{KTT}}(\emptyset, \Z^\ell;q)=q^\ell$ while $\#{\mathrm{CW}}(\emptyset, \scB, \Z^\ell; q)$ still depends on $\scB$.
%******************************************************************************************************************************

\subsection{Chromatic quasi-polynomials} 
\label{subsec:ChQP} 
Let $(\A, \Gamma)$ be any pair. 
Br{\"a}nd{\'e}n-Moci \cite{BM14} defined the following set 
$${\mathrm{BM}}(\A, \Gamma; q):= \{ \varphi \in\Hom(\Gamma, \Z/q\Z)\mid \varphi(\alpha) \ne \overline0 \mbox{ for all }\alpha\in\A\},$$
and proved that its cardinality $\#{\mathrm{BM}}(\A, \Gamma; q)$ is a quasi-polynomial in $q$ for which $\rho_{\A}$ is a period.\footnote{Br{\"a}nd{\'e}n-Moci actually defined a somewhat different period. However, the fact that $\rho_{\A}$ is also a period becomes clear after proving Theorem \ref{thm:gen-quasi}.} 
Thus any characteristic quasi-polynomial is indeed a Br{\"a}nd{\'e}n-Moci's quasi-polynomial by the following way.
Fix a standard basis (of unit vectors) $\{\epsilon_1, \ldots, \epsilon_\ell\}$ for $\Z^\ell$, and apply the isomorphism $\Hom(\Z^\ell,  \Z/q\Z) \simeq (\Z/q\Z)^\ell$ to obtain ${\mathrm{KTT}}(\A, \Z^\ell;q)={\mathrm{BM}}(\A,  \Z^\ell; q)$.

The authors named the quasi-polynomial the \emph{chromatic quasi-polynomial} as it generalizes the concept of chromatic polynomials defined on graphs. 
We briefly recall how it can be seen. 
Let $\G = (V,E)$ be a graph. Define a list of vectors $\scL = \{\alpha_e \mid e \in E\}$ in $\Z^{\#V}$ as follows. 
If $e=(ij) \in E$, let $\alpha_e$ be the vector with entry $j$ is $1$, entry $i$ is $-1$, and the other entries are $0$. 
Then the chromatic polynomial $c_{\G}(q)$ of $\G$ can be expressed as $c_{\G}(q)=\#{\mathrm{BM}}(\scL, \Z^{\#V}; q)$ with the unique constituent coincides with the characteristic polynomial of the real \emph{graphical arrangement} $\{\{x_i=x_j\} \mid (ij) \in E\}$ in variable $q$ (e.g., \cite[Theorem 2.88]{OT92}).
%******************************************************************************************************************************
\section{Unify the quasi-polynomials} 
\label{sec:unify}

\subsection{$G$-Tutte polynomials}
\label{subsec:G-Tutte} 
The Chen-Wang's quasi-polynomial and the Br{\"a}nd{\'e}n-Moci's chromatic quasi-polynomial arise independently in different contexts and may seem unrelated at first glance. 
We will show that the notion of $G$-Tutte polynomials is useful to unify them.

Let $G$ be an arbitrary abelian group.
We recall the notions of $G$-plexifications and $G$-Tutte polynomials of $\A$ following \cite[\S3]{LTY17}. 
We regard $\Hom(\Gamma, G)$ as our total group. 
For each $\alpha \in \A$, we define the \emph{$G$-hyperplane} associated to $\alpha$ as follows:
\begin{equation*}
H_{\alpha, G}:=\{\varphi \in\Hom(\Gamma, G)\mid \varphi (\alpha)= 0\}. 
\end{equation*}
Then the \emph{$G$-plexification} $\A(G)$ of $\A$ is the collection of the subgroups $H_{\alpha, G}$
$$\A(G):=\{H_{\alpha, G}\mid\alpha\in\A\}.$$
The \emph{$G$-complement} $\M(\A; \Gamma, G)$ of $\A(G)$ is defined by
\begin{equation*}
\M(\A; \Gamma, G):=\Hom(\Gamma, G)\smallsetminus\bigcup_{\alpha\in\A}H_{\alpha, G}. 
\end{equation*}

 For any sublist $\scS\subseteq \A$, the \emph{deletion} $\A\smallsetminus\scS$ is defined as a list of elements in the same group $\Gamma$. 
We also define the \emph{contraction} $\A/\scS$ as the list of cosets $\{\overline{\alpha}\mid 
\alpha\in\A\smallsetminus\scS\}$ in the group $\Gamma/\langle\scS\rangle$. 
The method of identifying sets discussed in \cite[\S3.2]{LTY17} enables us to write 
\begin{equation}
 \label{eq:T&chi}
\M(\A/\scS; \Gamma/\langle\scS\rangle, G)=\left\{
\varphi\in\Hom(\Gamma, G)
\left|
\begin{array}{cc}
\varphi(\alpha)=0, &\mbox{for all }\alpha\in\scS\\
\varphi(\alpha)\neq 0, &\mbox{for all }\alpha\in\A\smallsetminus\scS
\end{array}
\right.
\right\}
\end{equation}

An abelian group $G$ is said to be \emph{torsion-wise finite} if $G[d]:=\{x\in G\mid d\cdot x=0\}$ is finite for all $d\in\Z_{>0}$. 
In what follows, we assume that $G$ is torsion-wise finite. 
 \begin{definition}
\label{def:multiplicit}
The \emph{$G$-multiplicity} $m(\scS; G)$ for each $\scS\subseteq \A$ is defined by 
\begin{equation*}
m(\scS; G):=
\#\Hom\left((\Gamma/\langle\scS\rangle)_{\tor}, G\right). 
\end{equation*}
\end{definition}

\begin{definition}\quad
\label{def:main}
\begin{enumerate}[(1)]
\item  
The \emph{$G$-Tutte polynomial} $T_{\A}^{G}(x, y)$ of $\A$ is defined by 
\begin{equation*}
T_{\A}^{G}(x, y):=
\sum_{\scS\subseteq \A}m(\scS; G)(x-1)^{r_\A-r_\scS}(y-1)^{\#\scS-r_\scS}. 
\end{equation*}
\item 
The \emph{$G$-characteristic polynomial} $\chi_{\A}^G(t)$ of $\A$ is defined by 
\begin{equation*}
\chi_{\A}^G(t):=(-1)^{r_\A}\cdot t^{r_{\Gamma}-r_\A}\cdot T_{\A}^{G}(1-t, 0). 
\end{equation*}
\end{enumerate}
\end{definition}

The notion of $q$-reduction arrangements we mentioned in \S\ref{subsec:CQP} is specialization of that of $\Z/q\Z$-plexifications.
For general $\Gamma$, it turns out that 
\begin{equation}
\label{eq:BM=G-Tutte}
{\mathrm{BM}}(\A, \Gamma; q)=\M(\A; \Gamma, \Z/q\Z).
\end{equation} 

Using formula \eqref{eq:T&chi} and the equality \eqref{eq:BM=G-Tutte} above, we can write
\begin{equation}
\label{eq:CW}
{\mathrm{CW}}(\A, \scB, \Z^\ell; q)={\mathrm{BM}}((\A\sqcup\scB)/\scB, \Z^\ell/\langle\scB\rangle; q). 
\end{equation} 
Thus any Chen-Wang's quasi-polynomial is a chromatic quasi-polynomial defined on a certain contraction list. 
The converse is also true as we will see in the lemma below.
\begin{lemma}
\label{lem:useful-construction}
 Given a pair $(\A, \Gamma)$ with $\Gamma\simeq \Z^r \oplus\Z/d_1\Z\oplus\cdots\oplus\Z/d_s\Z$, we can find two lists $Q \subseteq \scL \subseteq \Z^{r+s}$ with $r_Q=s$ such that $\A = \scL/Q$. 
\end{lemma}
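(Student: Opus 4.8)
The plan is to present $\Gamma$ explicitly as a quotient of the free group $\Z^{r+s}$ whose relations are encoded by a rank-$s$ sublist, and then to lift the members of $\A$ through the quotient map. Let $\{\epsilon_1,\dots,\epsilon_{r+s}\}$ denote the standard basis of $\Z^{r+s}$, and set
\begin{equation*}
Q:=\{\,d_1\epsilon_{r+1},\,d_2\epsilon_{r+2},\,\dots,\,d_s\epsilon_{r+s}\,\}.
\end{equation*}
Since the $d_i$ are positive, the $s$ vectors in $Q$ are linearly independent, so $\langle Q\rangle=\bigoplus_{i=1}^s d_i\Z\,\epsilon_{r+i}$ is free of rank $s$ and $r_Q=s$, as required.

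First I would record the structural isomorphism. The quotient map kills each $d_i\epsilon_{r+i}$, so there is a canonical isomorphism
\begin{equation*}
\Z^{r+s}/\langle Q\rangle\;\simeq\;\Z^{r}\oplus\bigoplus_{i=1}^s\Z/d_i\Z\;\simeq\;\Gamma,
\end{equation*}
sending $\overline{\epsilon_1},\dots,\overline{\epsilon_r}$ to a free basis of $\Gamma$ and $\overline{\epsilon_{r+1}},\dots,\overline{\epsilon_{r+s}}$ to generators of the cyclic torsion summands $\Z/d_i\Z$. Write $p\colon\Z^{r+s}\twoheadrightarrow\Gamma$ for the composition of the natural projection with this isomorphism; it is surjective with $\Ker p=\langle Q\rangle$.

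Next I would lift $\A$. For each $\alpha\in\A$ choose any preimage $\tilde\alpha\in p^{-1}(\alpha)$ (possible since $p$ is onto), taken with multiplicity, and set
\begin{equation*}
\scL:=Q\sqcup\{\,\tilde\alpha\mid\alpha\in\A\,\}.
\end{equation*}
Then $Q\subseteq\scL\subseteq\Z^{r+s}$, and $\scL\smallsetminus Q=\{\tilde\alpha\mid\alpha\in\A\}$, so by the definition of contraction
\begin{equation*}
\scL/Q=\{\,\overline{\tilde\alpha}\mid\alpha\in\A\,\}\subseteq\Z^{r+s}/\langle Q\rangle.
\end{equation*}
Under the identification $\Z^{r+s}/\langle Q\rangle\simeq\Gamma$ above, the coset $\overline{\tilde\alpha}$ corresponds to $p(\tilde\alpha)=\alpha$, and hence $\scL/Q=\A$ as lists in $\Gamma$, which would complete the proof.

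The construction is essentially forced, so there is no deep obstacle; the only points demanding care are bookkeeping ones. First, $\A$ is a multiset, so the lifts must be recorded with their multiplicities and $\scL$ must be formed as a multiset disjoint union, in order that the multiplicities of $\scL/Q$ and of $\A$ agree. Second, the equality $\scL/Q=\A$ holds only after fixing the isomorphism $\Z^{r+s}/\langle Q\rangle\simeq\Gamma$; I would state this identification explicitly so that the assertion ``$\A=\scL/Q$'' is unambiguous. Everything else reduces to the Structure Theorem already invoked for $\Gamma$.
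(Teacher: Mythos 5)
Your proof is correct and follows essentially the same route as the paper: both take $Q$ to be the list of vectors $d_i\epsilon_{r+i}$, identify $\Gamma$ with $\Z^{r+s}/\langle Q\rangle$, lift each element of $\A$ to a representative in $\Z^{r+s}$, and set $\scL := \tilde{\A}\sqcup Q$. Your version merely makes explicit the bookkeeping (multiplicities and the choice of identification) that the paper leaves implicit.
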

 \begin{proof} 
 We can view $\Gamma\simeq\Z^{r+s}/\langle Q \rangle$, where $Q=\{q_1,\ldots, q_s\} \subseteq \Z^{r+s}$, $q_i$ has $d_i$ in the $(r + i )$-th coordinate and 0 elsewhere. 
Thus $\A$ can be identified with a list of cosets $\A= \{\overline a _1,\ldots, \overline a _k\}$ with $a_i \in \Z^{r+s}$. 
We choose a representative $a _i \in \Z^{r+s}$ for each coset, which is determined up to a linear combination of elements from $Q$.
Define $\tilde{\A}:= \{a _1,\ldots,  a _k\}\subseteq\Z^{r+s}$, and $\scL := \tilde{\A}\sqcup Q \subseteq \Z^{r+s}$. 
Thus $\A=\scL/Q$. 
\end{proof}

\begin{remark}
\label{rem:useful-construction}
The construction of the lists $\tilde{\A}$ and $\scL$ presented in Lemma \ref{lem:useful-construction} is probably well-known among experts, for instance \cite[\S3.4]{DM13}, wherein it plays a crucial role in proving the representability of the duals of arithmetic matroids.
\end{remark}

With the notation as in Lemma \ref{lem:useful-construction}, for any pair $(\A, \Gamma)$ we can write 
\begin{equation}
\label{eq:CW=BM}
{\mathrm{BM}}(\A, \Gamma; q)={\mathrm{CW}}(\tilde{\A}, Q, \Z^{r+s}; q).
\end{equation}

We have verified that the Chen-Wang's quasi-polynomial and the Br{\"a}nd{\'e}n-Moci's chromatic quasi-polynomial are ``equivalent'' in the sense that the quasi-polynomials enumerate the cardinalities of isomorphic sets.

%******************************************************************************************************************************
\subsection{More on chromatic quasi-polynomials}
\label{subsec:Chromatic}
Given any pair $(\A, \Gamma)$, let us denote by $\chi^{\quasi}_{\A}(q)$ the chromatic quasi-polynomial of $\A$ i.e., $\chi^{\quasi}_{\A}(q)=\#\M(\A; \Gamma, \Z/q\Z)$.
We also write $f_{\A}^k(t)$ for the $k$-constituent of $\chi^{\quasi}_{\A}(q)$ ($1 \le k \le \rho_\A$). 
\begin{theorem}[\cite{BM14}, \cite{LTY17}]
\label{thm:gen-quasi}
$\chi^{\quasi}_{\A}(q)=\chi_{\A}^{\Z/q\Z}(q).$
\end{theorem}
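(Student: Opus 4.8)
The plan is to show that both sides of the claimed identity expand, for every $q\in\Z_{>0}$, into one and the same ``Whitney-type'' sum over sublists, namely
$$\sum_{\scS\subseteq\A}(-1)^{\#\scS}\,m(\scS;\Z/q\Z)\,q^{\,r_{\Gamma}-r_{\scS}}.$$
The left-hand side $\chi^{\quasi}_{\A}(q)=\#\M(\A;\Gamma,\Z/q\Z)$ will be brought to this form by inclusion–exclusion over the $\Z/q\Z$-hyperplanes, while the right-hand side $\chi_{\A}^{\Z/q\Z}(q)$ will be brought to it by a direct algebraic simplification of the defining substitution in the $G$-Tutte polynomial. Once both expressions are reduced to this common sum, the theorem follows at once.

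For the combinatorial side, I would first apply inclusion–exclusion to the complement $\Hom(\Gamma,\Z/q\Z)\smallsetminus\bigcup_{\alpha\in\A}H_{\alpha,\Z/q\Z}$, obtaining
$$\#\M(\A;\Gamma,\Z/q\Z)=\sum_{\scS\subseteq\A}(-1)^{\#\scS}\,\#\Bigl(\bigcap_{\alpha\in\scS}H_{\alpha,\Z/q\Z}\Bigr).$$
The next step is to observe that $\varphi\colon\Gamma\to\Z/q\Z$ lies in $\bigcap_{\alpha\in\scS}H_{\alpha,\Z/q\Z}$ exactly when it annihilates every $\alpha\in\scS$, hence factors through the quotient; by the universal property this identifies the intersection with $\Hom(\Gamma/\langle\scS\rangle,\Z/q\Z)$. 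Finally I would invoke the Structure Theorem decomposition $\Gamma/\langle\scS\rangle\simeq(\Gamma/\langle\scS\rangle)_{\tor}\oplus\Z^{\,r_{\Gamma}-r_{\scS}}$ together with additivity of $\Hom(-,\Z/q\Z)$ on direct sums and $\#\Hom(\Z,\Z/q\Z)=q$, splitting the count as $m(\scS;\Z/q\Z)\cdot q^{\,r_{\Gamma}-r_{\scS}}$ and recovering the target sum.

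For the algebraic side, I would substitute $x=1-t$ and $y=0$ into the definition of $T_{\A}^{G}$: each summand contributes $m(\scS;G)\,(-t)^{r_{\A}-r_{\scS}}(-1)^{\#\scS-r_{\scS}}$, and after multiplying by the prefactor $(-1)^{r_{\A}}t^{\,r_{\Gamma}-r_{\A}}$ the power of $t$ becomes $t^{\,r_{\Gamma}-r_{\scS}}$ and the accumulated sign is $(-1)^{(\#\scS-r_{\scS})+(r_{\A}-r_{\scS})+r_{\A}}=(-1)^{\#\scS}$. Specializing $G=\Z/q\Z$ and $t=q$ then reproduces exactly the Whitney-type sum above.

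I do not expect a genuine obstacle here; the argument is a clean inclusion–exclusion married to a sign simplification. The only place demanding care is the sign bookkeeping in the algebraic step, where the three sources of signs—$(-t)^{r_{\A}-r_{\scS}}$ coming from $x-1$, the factor $(-1)^{\#\scS-r_{\scS}}$ coming from $y-1$, and the leading $(-1)^{r_{\A}}$—must be combined so that the $r_{\scS}$- and $r_{\A}$-dependent exponents cancel and only $(-1)^{\#\scS}$ survives. The identification of $\bigcap_{\alpha\in\scS}H_{\alpha,\Z/q\Z}$ with $\Hom(\Gamma/\langle\scS\rangle,\Z/q\Z)$ is merely the universal property of the quotient and needs no separate proof.
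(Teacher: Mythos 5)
Your proof is correct: the inclusion–exclusion over sublists, the identification of $\bigcap_{\alpha\in\scS}H_{\alpha,\Z/q\Z}$ with $\Hom(\Gamma/\langle\scS\rangle,\Z/q\Z)$, the splitting $\#\Hom(\Gamma/\langle\scS\rangle,\Z/q\Z)=m(\scS;\Z/q\Z)\,q^{r_\Gamma-r_\scS}$, and the sign bookkeeping in the substitution $x=1-t$, $y=0$ all check out. The paper gives no proof of its own here, citing \cite{BM14} and \cite{LTY17}; your argument is exactly the standard one underlying those references, so there is nothing further to compare.
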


\begin{proposition}[\cite{CW12}]
\label{prop:gen-quasi-gcd2}\quad
\begin{enumerate}[(1)]
\item For any $k$ with $1 \le k \le\rho_{\A}$,  $f^k_{\A}(t)=\chi_{\A}^{\Z/k\Z}(t)$. 
\item $\chi^{\quasi}_{\A}(q)$ satisfies the \emph{GCD-property} i.e. $f_{\A}^a(t)=f_{\A}^{b}(t)$ if $\gcd(a, \rho_\A)=
\gcd(b, \rho_\A)$. 
\item For any $k$ with $1 \le k \le\rho_{\A}$, if $\gcd(q, \rho_\A)=k$, then $\chi^{\quasi}_{\A}(q)=f^k_{\A}(q)$.
\end{enumerate}
\end{proposition}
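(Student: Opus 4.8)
The plan is to reduce all three parts to a single observation: by Theorem~\ref{thm:gen-quasi} we have $\chi^{\quasi}_{\A}(q)=\chi_{\A}^{\Z/q\Z}(q)$, and inspecting Definition~\ref{def:main} the polynomial $\chi_{\A}^{\Z/q\Z}(t)$ depends on $q$ \emph{only} through the $G$-multiplicities $m(\scS;\Z/q\Z)$ for $\scS\subseteq\A$. Hence it suffices to understand precisely how each $m(\scS;\Z/q\Z)$ varies with $q$, and I would isolate this as the technical heart of the argument.

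First I would compute the multiplicities explicitly. Writing $(\Gamma/\langle\scS\rangle)_{\tor}\simeq\bigoplus_{i=1}^{n_{\scS}}\Z/d_{\scS,i}\Z$ as in the Structure Theorem and using $\#\Hom(\Z/m\Z,\Z/q\Z)=\gcd(m,q)$, Definition~\ref{def:multiplicit} gives
\begin{equation*}
m(\scS;\Z/q\Z)=\#\Hom\bigl((\Gamma/\langle\scS\rangle)_{\tor},\Z/q\Z\bigr)=\prod_{i=1}^{n_{\scS}}\gcd(d_{\scS,i},q).
\end{equation*}
The key point is that every elementary divisor $d_{\scS,i}$ divides $d_{\scS,n_{\scS}}$, which in turn divides $\rho_{\A}=\lcm(d_{\scS,n_{\scS}}\mid\scS\subseteq\A)$; thus $d_{\scS,i}\mid\rho_{\A}$ for all $\scS\subseteq\A$ and all $i$. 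For any $d\mid\rho_{\A}$ one has the elementary identity $\gcd(d,q)=\gcd\bigl(d,\gcd(q,\rho_{\A})\bigr)$. Consequently $m(\scS;\Z/q\Z)$, and therefore the whole polynomial $\chi_{\A}^{\Z/q\Z}(t)$, depends on $q$ only through $\gcd(q,\rho_{\A})$. The hard part will be verifying this divisibility for \emph{all} elementary divisors, not merely the largest one $d_{\scS,n_{\scS}}$ that literally appears in the definition of $\rho_{\A}$; once that is in place the rest is bookkeeping.

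With this in hand the three parts follow quickly. For (1), if $q\equiv k\bmod\rho_{\A}$ then $\gcd(q,\rho_{\A})=\gcd(k,\rho_{\A})$, so $m(\scS;\Z/q\Z)=m(\scS;\Z/k\Z)$ for every $\scS$, whence $\chi_{\A}^{\Z/q\Z}(t)=\chi_{\A}^{\Z/k\Z}(t)$ as polynomials. Evaluating at $t=q$ and applying Theorem~\ref{thm:gen-quasi} yields $\chi^{\quasi}_{\A}(q)=\chi_{\A}^{\Z/k\Z}(q)$ for every $q$ in the infinite progression $q\equiv k\bmod\rho_{\A}$; since $f^{k}_{\A}(t)$ agrees with $\chi^{\quasi}_{\A}(q)$ on this progression, the two polynomials $f^{k}_{\A}(t)$ and $\chi_{\A}^{\Z/k\Z}(t)$ agree at infinitely many integers and are therefore identical. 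For (2), if $\gcd(a,\rho_{\A})=\gcd(b,\rho_{\A})$ then again $m(\scS;\Z/a\Z)=m(\scS;\Z/b\Z)$ for all $\scS$, so $\chi_{\A}^{\Z/a\Z}(t)=\chi_{\A}^{\Z/b\Z}(t)$, and (1) gives $f^{a}_{\A}(t)=f^{b}_{\A}(t)$. For (3), if $\gcd(q,\rho_{\A})=k$ then $k\mid\rho_{\A}$, so $\gcd(k,\rho_{\A})=k$ and the gcd-dependence forces $m(\scS;\Z/q\Z)=m(\scS;\Z/k\Z)$ for all $\scS$; hence $\chi_{\A}^{\Z/q\Z}(q)=\chi_{\A}^{\Z/k\Z}(q)$, and combining Theorem~\ref{thm:gen-quasi} with part (1) gives $\chi^{\quasi}_{\A}(q)=f^{k}_{\A}(q)$.
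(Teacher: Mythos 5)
Your proof is correct. The paper itself offers no argument for this proposition --- it is stated with a citation to \cite{CW12} --- so your derivation is a genuine, self-contained proof rather than a reproduction of one in the text; it follows the natural route of reducing everything to Theorem~\ref{thm:gen-quasi} plus the observation that $\chi_{\A}^{\Z/q\Z}(t)$ depends on $q$ only through the multiplicities $m(\scS;\Z/q\Z)=\prod_{i}\gcd(d_{\scS,i},q)$, which in turn depend only on $\gcd(q,\rho_{\A})$. One remark: the step you single out as ``the hard part'' --- that \emph{every} elementary divisor $d_{\scS,i}$, not just the largest, divides $\rho_{\A}$ --- is actually immediate from the normalization $d_{\scS,i}\mid d_{\scS,i+1}$ built into the paper's statement of the Structure Theorem decomposition, since then $d_{\scS,i}\mid d_{\scS,n_{\scS}}\mid\lcm(d_{\scS,n_{\scS}}\mid\scS\subseteq\A)=\rho_{\A}$. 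With that observed, the identity $\gcd(d,q)=\gcd(d,\gcd(q,\rho_{\A}))$ for $d\mid\rho_{\A}$ and the polynomial-identity argument via agreement on an infinite arithmetic progression complete all three parts exactly as you describe.
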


Fix $\alpha\in\A$. Denote $\A':=\A\smallsetminus\{\alpha\}$, and $\A'':=\A/\{\alpha\}$.
\begin{theorem}[Deletion-Contraction formula]
\label{thm:DC-formula}
$$\chi^{\quasi}_{\A}(q)=\chi^{\quasi}_{\A'}(q)-\chi^{\quasi}_{\A''}(q).$$
\end{theorem}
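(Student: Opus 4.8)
The plan is to translate both sides into the language of $G$-complements with $G=\Z/q\Z$ and then reduce the identity to a single partition. By the definition of $\chi^{\quasi}$ together with \eqref{eq:BM=G-Tutte} we have $\chi^{\quasi}_{\A}(q)=\#\M(\A;\Gamma,\Z/q\Z)$ and $\chi^{\quasi}_{\A'}(q)=\#\M(\A';\Gamma,\Z/q\Z)$, while $\chi^{\quasi}_{\A''}(q)=\#\M(\A'';\Gamma/\langle\alpha\rangle,\Z/q\Z)$, the last living over the contracted group since $\A''=\A/\{\alpha\}$. Hence it suffices to establish the set-level identity
\begin{equation*}
\#\M(\A';\Gamma,\Z/q\Z)=\#\M(\A;\Gamma,\Z/q\Z)+\#\M(\A'';\Gamma/\langle\alpha\rangle,\Z/q\Z).
\end{equation*}

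First I would partition $\M(\A';\Gamma,\Z/q\Z)$, the set of $\varphi\in\Hom(\Gamma,\Z/q\Z)$ with $\varphi(\beta)\neq 0$ for every $\beta\in\A'=\A\smallsetminus\{\alpha\}$, according to the value of $\varphi$ on the distinguished element $\alpha$:
\begin{equation*}
\M(\A';\Gamma,\Z/q\Z)=\{\varphi\in\M(\A';\Gamma,\Z/q\Z)\mid\varphi(\alpha)\neq 0\}\sqcup\{\varphi\in\M(\A';\Gamma,\Z/q\Z)\mid\varphi(\alpha)= 0\}.
\end{equation*}
The first block equals $\M(\A;\Gamma,\Z/q\Z)$ by definition, since adjoining the requirement $\varphi(\alpha)\neq 0$ to the conditions defining $\M(\A')$ forces $\varphi$ to be nonzero on every element of $\A=\A'\sqcup\{\alpha\}$.

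Second, I would identify the remaining block with the contraction's complement by invoking \eqref{eq:T&chi} for the sublist $\scS=\{\alpha\}$, which asserts that
\begin{equation*}
\M(\A/\{\alpha\};\Gamma/\langle\alpha\rangle,\Z/q\Z)=\left\{\varphi\in\Hom(\Gamma,\Z/q\Z)\mid\varphi(\alpha)=0\text{ and }\varphi(\beta)\neq 0\text{ for all }\beta\in\A\smallsetminus\{\alpha\}\right\}.
\end{equation*}
The right-hand side is exactly the block $\{\varphi\in\M(\A';\Gamma,\Z/q\Z)\mid\varphi(\alpha)=0\}$, so this block has cardinality $\#\M(\A'';\Gamma/\langle\alpha\rangle,\Z/q\Z)$. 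Taking cardinalities across the disjoint partition then gives the displayed set-level identity, and a trivial rearrangement yields $\chi^{\quasi}_{\A}(q)=\chi^{\quasi}_{\A'}(q)-\chi^{\quasi}_{\A''}(q)$.

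I do not anticipate a genuine obstacle: the whole argument is a one-coordinate inclusion--exclusion, and its two ingredients---the dictionary \eqref{eq:BM=G-Tutte} between ${\mathrm{BM}}$ and $\M$, and the identification \eqref{eq:T&chi} of a contraction's complement with a constrained solution set---are already available. The only point meriting a moment's care is the multiset bookkeeping when $\alpha$ occurs in $\A$ with multiplicity greater than one: then $\A'$ still contains a copy of $\alpha$ and $\A''$ contains the coset $\overline{\alpha}=\overline{0}$, so both $\{\varphi\in\M(\A';\Gamma,\Z/q\Z)\mid\varphi(\alpha)=0\}$ and $\M(\A'';\Gamma/\langle\alpha\rangle,\Z/q\Z)$ are empty; since \eqref{eq:T&chi} is stated for an arbitrary sublist $\scS$, this degenerate case is absorbed verbatim and the formula persists.
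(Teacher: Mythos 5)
Your proof is correct. The paper itself does not give a direct argument: it simply cites \cite[Corollary 4.11]{LTY17} together with Theorem \ref{thm:gen-quasi} (i.e.\ it deduces the identity from a deletion--contraction recursion for the $G$-Tutte polynomial evaluated at $G=\Z/q\Z$), and then remarks that the statement can also be obtained from \cite[Proposition 3.4]{LTY17}. Your argument --- partitioning $\M(\A';\Gamma,\Z/q\Z)$ according to whether $\varphi(\alpha)$ vanishes, identifying the block with $\varphi(\alpha)\neq 0$ as $\M(\A;\Gamma,\Z/q\Z)$ and the block with $\varphi(\alpha)=0$ as $\M(\A'';\Gamma/\langle\alpha\rangle,\Z/q\Z)$ via \eqref{eq:T&chi} --- is exactly the set-theoretic content of that second cited route, so in effect you have supplied a self-contained proof of what the paper outsources. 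What your version buys is transparency: it makes clear that the recursion holds at the level of sets for every individual $q$, with no appeal to the polynomial machinery; what the paper's primary route buys is that the same recursion is seen to hold for the $G$-Tutte polynomial for arbitrary torsion-wise finite $G$, of which this theorem is one specialization. Your closing remark on the multiset bookkeeping (when $\alpha$ occurs with multiplicity greater than one, both the $\varphi(\alpha)=0$ block and $\M(\A'';\Gamma/\langle\alpha\rangle,\Z/q\Z)$ are empty because $\overline{0}\in\A''$) is accurate and handles the only degenerate case correctly.
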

 \begin{proof}
This follows directly from \cite[Corollary 4.11]{LTY17} and Theorem \ref{thm:gen-quasi} by letting $G=\Z/q\Z$. 
We can also obtain it from \cite[Proposition 3.4]{LTY17}.
\end{proof}

\begin{remark}
\label{rem:DR-formula}
Using Theorem \ref{thm:DC-formula}, the Deletion-Restriction formula in \cite[Lemma 3.3]{CW12} can be exhibited by setting $\A$ as the contraction list $(A\sqcup B)/B$, where $A \ne \emptyset$ and $B$ are finite lists in $\Z^\ell$.
\end{remark}

\begin{corollary} 
\label{cor:DC-const}
 If $k\le\min\{\rho_{\A'},\rho_{\A''}\}$, then the $k$-constituents satisfy
 $$f^k_{\A}(t)=f^k_{\A'}(t)-f^k_{\A''}(t).$$ 
 \end{corollary}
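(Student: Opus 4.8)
The plan is to reduce the asserted identity of $k$-constituents to the Deletion-Contraction formula for $G$-characteristic polynomials specialized at $G = \Z/k\Z$, using Proposition \ref{prop:gen-quasi-gcd2}(1), which identifies each constituent with such a polynomial. The delicate preliminary point—and the step I expect to require the most care—is the bookkeeping of periods, namely that both $\rho_{\A'}$ and $\rho_{\A''}$ divide $\rho_\A$, so that $k$ is a legitimate index for all three constituents $f^k_\A$, $f^k_{\A'}$, $f^k_{\A''}$. For the deletion $\A' = \A \smallsetminus \{\alpha\}$ this is immediate, since every sublist $\scS \subseteq \A'$ is also a sublist of $\A$ and hence $\{d_{\scS, n_\scS} \mid \scS \subseteq \A'\} \subseteq \{d_{\scS, n_\scS} \mid \scS \subseteq \A\}$, whence $\rho_{\A'} \mid \rho_\A$. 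For the contraction $\A'' = \A/\{\alpha\}$, a sublist $\scT \subseteq \A''$ is the image $\overline{\scS}$ of some $\scS \subseteq \A'$, and the third isomorphism theorem gives $(\Gamma/\langle\alpha\rangle)/\langle\scT\rangle \simeq \Gamma/\langle \scS \cup \{\alpha\}\rangle$; as $\scS \cup \{\alpha\} \subseteq \A$, the largest torsion invariant of this quotient already occurs among those of $\A$, so $\rho_{\A''} \mid \rho_\A$ as well. In particular $k \le \min\{\rho_{\A'}, \rho_{\A''}\} \le \rho_\A$.

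With the periods sorted out, I would apply Proposition \ref{prop:gen-quasi-gcd2}(1) to each of $\A$, $\A'$, $\A''$—legitimate because $k \le \rho_\A$, $k \le \rho_{\A'}$, and $k \le \rho_{\A''}$, respectively—to rewrite
\begin{equation*}
f^k_\A(t) = \chi^{\Z/k\Z}_\A(t), \qquad f^k_{\A'}(t) = \chi^{\Z/k\Z}_{\A'}(t), \qquad f^k_{\A''}(t) = \chi^{\Z/k\Z}_{\A''}(t).
\end{equation*}
The claim then reads $\chi^{\Z/k\Z}_\A(t) = \chi^{\Z/k\Z}_{\A'}(t) - \chi^{\Z/k\Z}_{\A''}(t)$, which is precisely the Deletion-Contraction formula for $G$-characteristic polynomials at the (finite, hence torsion-wise finite) group $G = \Z/k\Z$. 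This is the same instance of \cite[Corollary 4.11]{LTY17} that proves Theorem \ref{thm:DC-formula}, now read with $G = \Z/k\Z$ rather than $G = \Z/q\Z$; since it is an identity of polynomials in the formal variable $t$, no evaluation is needed and the proof is complete.

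A second, self-contained route stays entirely at the constituent level. Using the divisibilities $\rho_{\A'}, \rho_{\A''} \mid \rho_\A$, any $q$ with $q \equiv k \pmod{\rho_\A}$ also satisfies $q \equiv k \pmod{\rho_{\A'}}$ and $q \equiv k \pmod{\rho_{\A''}}$; because $1 \le k \le \rho_{\A'}, \rho_{\A''}$, the convention of indexing residues in $\{1, \ldots, \rho\}$ makes the $k$-constituent the one selected for each list at such $q$. Evaluating the quasi-polynomial Deletion-Contraction formula of Theorem \ref{thm:DC-formula} along this arithmetic progression then yields $f^k_\A(q) = f^k_{\A'}(q) - f^k_{\A''}(q)$ for infinitely many $q$, and polynomials agreeing at infinitely many integers must coincide. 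Either way, the only genuine obstacle is the period bookkeeping of the first paragraph; once that is settled, all the algebraic content is already furnished by Proposition \ref{prop:gen-quasi-gcd2}(1) and the Deletion-Contraction formula, and no new computation remains.
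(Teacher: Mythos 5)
Your proposal is correct and follows essentially the same route as the paper: the paper's entire proof is the one-line observation that the LCM-period of any deletion or contraction list divides that of the parent list, which is exactly the period bookkeeping you identify as the crux and verify carefully (including the third-isomorphism-theorem argument for the contraction), after which the identity follows from Theorem \ref{thm:DC-formula} together with Proposition \ref{prop:gen-quasi-gcd2}(1) just as you describe. Your write-up simply makes explicit what the paper leaves implicit, and either of your two concluding routes is a valid way to descend from the quasi-polynomial identity to the $k$-constituents.
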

\begin{proof}
Note that the LCM-period of any deletion/contract list is a divisor of the LCM-period of the parent list. 
\end{proof}
 
\begin{remark}
\label{rem:non-minimum}
For a pair $(\A, \Gamma)$, the LCM-period of $\chi^{\quasi}_{\A}(q)$ is not necessarily the minimum period. 
We clarify it by an example. 
Let $\Gamma=\Z/2\Z\oplus\Z/2\Z$, $\A=\{\alpha, \beta\}\subsetneq\Gamma$ with 
$\alpha=(\overline{0}, \overline{0})$ and $\beta=(\overline{1}, \overline{0})$. 
Then $\rho_\A=2$, while the minimum period is actually 1 and $\chi^{\quasi}_{\A}(q) = 0$ for every $q$.
Note that this fact can also be clarified by another class of examples originated from  \cite[Example 4.2]{CW12}.
\end{remark}

We close this section by giving a discussion on a problem asked in \cite[Problem 2]{CW12}.
\begin{problem}
\label{pro:CW}
Let $\A_1, \A_2$ be finite lists in $\Z^\ell$ with $r_{\A_2}=\ell$. 
 Assume that 
 $\#{\mathrm{CW}}(\A_1, \A_2, \Z^\ell; q)=0$ for every $q \in \Z_{>0}$. Then there exists $\alpha \in \A_1$ such that $\alpha \in \langle \A_2 \rangle$.

 \end{problem}
\begin{proof}[Discussion]
The statement is true if and only if $\ell=1$. 
Assume that $\ell=1$.
By equality \eqref{eq:CW}, we rewrite the assumption as $\#{\mathrm{BM}}(\A, \Gamma; q)=0$ with $\A = (\A_1\sqcup \A_2)/ \A_2$, and $\Gamma=\Z/\langle\A_2\rangle \simeq \Z/d\Z$ for some $d \in \Z_{>0}$. 
Suppose to the contrary that for every $\alpha \in \A_1$, $\alpha \notin \langle \A_2 \rangle$. 
It is equivalent to saying that $\overline{a}  \ne \overline{0}$ for all $\overline{a} \in \A$.
Set $T:=\{z \in \C \mid z^d=1\}$.
For each $\overline{a} \in \A$ with $0 \le a \le d-1$, set $T_a:=\{z\in T \mid z^a=1\}$. 
Thus
$$f^{\rho_\A}_{\A}(t)=\chi_{\A}^{\Z/\rho_\A\Z}(t)=\# \left( T\smallsetminus\bigcup_{\overline{a}\in\A}T_a\right)>0,$$
which is a contradiction. 
For $\ell \ge 2$, we show that the statement is not true by providing a counterexample.
Let us first prove the following fact: if $\Gamma=\Z/d_1\Z\oplus\cdots\oplus\Z/d_\ell\Z$ is a finite abelian group containing at least two distinct nonidentity elements of order 2, say $\beta_1,\beta_2$, and $\A=\{\alpha \in \Gamma \mid \alpha \ne 0_{\Gamma}\}$, then $\#{\mathrm{BM}}(\A, \Gamma; q)=0$ for every $q \in \Z_{>0}$. Indeed by definition,
\begin{align*}
{\mathrm{BM}}(\A, \Gamma; q)
&=  \{ \varphi \in \Hom(\Gamma,  \Z/q\Z) \mid \varphi(\alpha) \ne \overline{0},\,\mbox{for all }\alpha\in\A \}, \\
&= \{ \varphi \in \Hom(\Gamma,  \Z/q\Z) \mid \varphi \mbox{ is injective} \} .
\end{align*}
If the set above is nonempty, then $\varphi(\alpha)$, $\varphi(\beta)$ are distinct and both have order 2 in $\Z/q\Z$. 
This contradiction implies that $\#{\mathrm{BM}}(\A, \Gamma; q)=0$. 
With the notation as in equality \eqref{eq:CW=BM}, $\#{\mathrm{CW}}(\tilde{\A}, Q, \Z^{\ell}; q)=0$.
Now choose $\Gamma=(\Z/2\Z)^\ell$ with $\ell\ge 2$, and let $\A_1=\tilde{\A}$, $\A_2=Q$.

\end{proof}

\section{Application to hyperplane arrangements}
\label{sec:real-arr} 
The aim of this section is to generalize the result in  \cite{KTT08} that the $1$-constituent of $\chi^{\quasi}_{\A}(q)$ agrees with the characteristic polynomial of some real arrangement. A natural choice is $\A(\R)$. 
However, as long as $\Gamma$ is any finitely generated abelian group and the list $\A$ may contain torsion elements of $\Gamma$, we need to know what $\A(\R)$ is all about. 
It turns out that we can realize $\A(\R)$ as a certain (restriction of) integral arrangement. 
Then we also would like to compute the characteristic polynomial of any $\R$-plexification $\A(\R)$. 
These facts will be made clear in Theorem \ref{thm:quasi-general} and Propositions \ref{prop:no-loops}, \ref{prop:non-torsion}.
More generally, we will give other interpretations for every chromatic quasi-polynomial and their constituents through subspace and toric viewpoints in our forthcoming paper \cite{TY18}.

In the following setting and until before Proposition \ref{prop:no-loops}, we restrict our attention to the case $\Gamma =\Z^\ell$, and view $\A$ as a finite list of \emph{nonzero} vectors in $\Z^\ell$. 
We regard $\{\epsilon_1, \ldots, \epsilon_\ell\}$ as the standard basis for $\R^\ell$, and equip to it the standard inner product $(\cdot,\cdot)$. 
Then the $\R$-plexification $\A(\R)$ is an arrangement of (possibly repeated) hyperplanes in $\R^\ell$ with each hyperplane $H_{\alpha, \R}$ can be identified with $H_\alpha=\{x\in \R^\ell \mid (\alpha,x)=0\}$. 
Such $\R$-plexifications are integral arrangements. 
Let $L_{\A(\R)}$ be the \emph{intersection poset} (e.g., \cite[\S2.1]{OT92}) of $\A(\R)$. 
Note that we require the intersection poset to be a set, not multiset. 
Also, the ambient space $\R^\ell$ can be added to the arrangement without affecting the arrangement's intersection poset.
For each $X \in L_{\A(\R)}$, the \emph{localization} of $\A(\R)$ on $X$ is defined by 
$$\A(\R)_X := \{ H \in \A(\R) \mid X \subseteq H\},$$
and the \emph{restriction} ${\A(\R)}^{X}$ of $\A(\R)$ to $X$ is defined by 
$${\A(\R)}^{X}:= \{ H \cap X\mid H   \in \A(\R)\smallsetminus \A(\R)_X\}.$$
Denote by $X^\perp$ the orthogonal complement of $X$ in $\R^\ell$. 
Set 
$$\A_X:=\A \cap X^\perp \subseteq \A.$$

\begin{proposition}
\label{prop:gen-quasi-pro}
The following formulas are valid at level of multisets:
\begin{enumerate}[(1)]
\item $\A(\R)_X=(\A_X)(\R)$.
\item $\A(\R)^X=(\A/\A_X)(\R)$.
\end{enumerate}

\end{proposition}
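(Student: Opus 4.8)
The plan is to treat the two identities separately: (1) is a direct unwinding of definitions, while (2) rests on first recovering $X$ as the common zero locus of $\A_X$ and then identifying the ambient group of the contracted plexification with $X$ itself.

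For (1), observe that for $\alpha\in\A$ the hyperplane $H_\alpha$ contains $X$ if and only if $(\alpha,x)=0$ for every $x\in X$, which is precisely the condition $\alpha\in X^\perp$, i.e. $\alpha\in\A\cap X^\perp=\A_X$. Since the assignment $\alpha\mapsto H_\alpha$ is what builds $\A(\R)$ as a multiset indexed by $\A$, restricting this assignment to the indices lying in $X^\perp$ yields exactly the sub-multiset $(\A_X)(\R)$, with multiplicities preserved. This gives $\A(\R)_X=(\A_X)(\R)$.

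For (2) the first step is to pin down $X$. As an element of the intersection poset of a central arrangement, $X$ equals the intersection of all hyperplanes of $\A(\R)$ that contain it, i.e. $X=\bigcap_{H\in\A(\R)_X}H$; combined with (1) this reads $X=\bigcap_{\alpha\in\A_X}H_\alpha$, whence $X^\perp$ equals the real span of $\A_X$ (call it $\R\A_X$). The second step is to identify the total group of $(\A/\A_X)(\R)$. Writing $\Gamma'=\Z^\ell/\langle\A_X\rangle$ and applying $\Hom(-,\R)$, which annihilates torsion, gives $\Hom(\Gamma',\R)=\{v\in\R^\ell\mid(\alpha,v)=0\text{ for all }\alpha\in\A_X\}=(\R\A_X)^\perp=X$. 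Thus $(\A/\A_X)(\R)$ literally lives inside $X$.

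The final step is an index-by-index comparison over $\A\smallsetminus\A_X$, which by (1) is exactly the index set of $\A(\R)\smallsetminus\A(\R)_X$. For $\alpha\in\A\smallsetminus\A_X$, lifting $\varphi\in\Hom(\Gamma',\R)$ to $\Z^\ell$ and using the identification $\varphi\leftrightarrow v\in X$ gives $\varphi(\overline\alpha)=(\alpha,v)$, so the contracted hyperplane is $H_{\overline\alpha,\R}=\{v\in X\mid(\alpha,v)=0\}=H_\alpha\cap X$; note that $\alpha\notin\A_X$ forces $\overline\alpha\ne 0$ (because $\langle\A_X\rangle\subseteq\R\A_X=X^\perp$), so $H_\alpha\cap X$ is a genuine hyperplane of $X$ rather than all of $X$. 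Matching indices then yields the multiset equality $\A(\R)^X=(\A/\A_X)(\R)$. I expect the main obstacle to be the identification $X^\perp=\R\A_X$ together with the correct recognition of $\Hom(\Gamma',\R)$ as $X$: one must be careful that the contraction is formed with the integer span $\langle\A_X\rangle$ whereas the orthogonality computations only detect its real span, and that possible torsion in $\Gamma'$ does not enlarge $\Hom(\Gamma',\R)$. Once this identification is secured, the remainder is routine bookkeeping.
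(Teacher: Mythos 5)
Your proof is correct and follows essentially the same route as the paper's (which is much terser): part (1) by unwinding definitions, part (2) by recovering $X=\bigcap_{H\in\A(\R)_X}H$ and identifying $\Hom(\Z^\ell/\langle\A_X\rangle,\R)$ with $X$ so that $H_{\overline{\alpha},\R}=H_\alpha\cap X$. The only cosmetic difference is that you conclude the multiset equality by matching the common index set $\A\smallsetminus\A_X$, whereas the paper records the resulting common multiplicity $\#\{\gamma\in\A\smallsetminus\A_X\mid\gamma\in\mathrm{span}_{\R}\{\beta,\A_X\}\}$ of each element; the two arguments are the same in substance.
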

\begin{proof} 
The proof of (1) is straightforward. 
To prove (2), for every $X \in L_{\A(\R)}$ with $X \ne \R^\ell$, we use $X= \bigcap_{H\in\A(\R)_X} H$, the longest expression of $X$ in terms of intersection of the hyperplanes in $\A(\R)$.
To see $\A(\R)^X=(\A/\A_X)(\R)$ as multisets, note that the number of occurrences of each element $H_{\beta, \R}\cap X$ in these multisets is equal to $\#\{\gamma \in \A\smallsetminus \A_X \mid \gamma \in \mathrm{span}_{\R}\{\beta, \A_X\}\}$. 
\end{proof}

Denote by $\chi_{\scH}(t)$ the characteristic polynomial of the real arrangement $\scH$ (e.g., \cite[Definition 2.52]{OT92}). 
The following result is essentially appeared in \cite[Corollary 2.4]{CW12} (see also \cite[Corollary 6.1]{A96}). 
The idea of the proof is to use Whitney's theorem (e.g., \cite[Theorem 2.4]{St07}) and Proposition \ref{prop:gen-quasi-pro}.
\begin{lemma}
\label{lem:CP-restriction}
$\chi_{\A(\R)^X}(t)=f^1_{\A/\A_X}(t).$
\end{lemma}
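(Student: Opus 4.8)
The plan is to show that both sides equal one and the same Whitney-type alternating sum. Write $\scC := \A/\A_X$ for the contraction list; it lives in the quotient $\Gamma' := \Z^\ell/\langle\A_X\rangle$, and I set $W := \Hom(\Gamma', \R)$, a real vector space of dimension $r_{\Gamma'} = \ell - r_{\A_X}$. First I would apply Proposition~\ref{prop:gen-quasi-pro}(2) to replace the restriction by a plexification: $\A(\R)^X$ and $\scC(\R)$ agree as (multiset) arrangements, hence share the same intersection poset and the same characteristic polynomial. The problem thus reduces to proving $\chi_{\scC(\R)}(t) = f^1_{\scC}(t)$.

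Before invoking Whitney's theorem I would confirm that $\scC(\R)$ is an honest central arrangement in $W$. Each $H_{\overline\alpha, \R}$ contains the zero homomorphism, so the arrangement is central; what must be excluded is a degenerate ``hyperplane'' equal to all of $W$, which occurs precisely when $\overline\alpha$ is torsion in $\Gamma'$. But if $\overline\alpha$ with $\alpha\in\A\smallsetminus\A_X$ were torsion, a nonzero multiple of $\alpha$ would lie in $\langle\A_X\rangle\subseteq X^\perp$, forcing $\alpha\in X^\perp$ and hence $\alpha\in\A\cap X^\perp=\A_X$, a contradiction. So $\scC$ has no torsion (equivalently, no loops), and $\scC(\R)$ is a genuine central arrangement.

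Next I would expand the right-hand side. By Proposition~\ref{prop:gen-quasi-gcd2}(1), $f^1_{\scC}(t)=\chi^{\Z/1\Z}_{\scC}(t)$. Since $\Z/1\Z$ is trivial, $m(\scS;\Z/1\Z)=\#\Hom((\Gamma'/\langle\scS\rangle)_{\tor},\Z/1\Z)=1$ for every $\scS\subseteq\scC$, so $T^{\Z/1\Z}_{\scC}$ is the ordinary Tutte polynomial; feeding this into Definition~\ref{def:main}(2) and collecting signs gives
\begin{equation*}
f^1_{\scC}(t)=\chi^{\Z/1\Z}_{\scC}(t)=\sum_{\scS\subseteq\scC}(-1)^{\#\scS}\,t^{\,r_{\Gamma'}-r_{\scS}}.
\end{equation*}

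It remains to identify this sum with $\chi_{\scC(\R)}(t)$ via Whitney's theorem (\cite[Theorem~2.4]{St07}). The evaluation pairing $W\times\Gamma'\to\R$ factors through $\Gamma'\otimes\R\cong\R^{r_{\Gamma'}}$, killing the torsion of $\Gamma'$ and sending $\overline\alpha$ to the normal vector of $H_{\overline\alpha,\R}$; consequently the abelian-group rank $r_{\scS}$ of any sublist equals its geometric rank as a family of normals in $W$, so the exponent $r_{\Gamma'}-r_{\scS}$ is exactly $\dim\bigcap_{\overline\alpha\in\scS}H_{\overline\alpha,\R}$. For the central arrangement $\scC(\R)$, Whitney's theorem expresses $\chi_{\scC(\R)}(t)$ as precisely this alternating sum, matching the display above. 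The step I expect to be the main obstacle is this last reconciliation: Whitney's theorem is naturally phrased over the \emph{set} of distinct hyperplanes, whereas the $G$-Tutte formalism produces a sum over sublists of the \emph{multiset} $\scC$. Bridging the two needs the rank identification just described together with the standard cancellation within each parallel class (a class of multiplicity $m$ contributing $\sum_{j}\binom{m}{j}(-1)^j$ times equal powers of $t$), and it is here that the absence of loops established above is quietly used.
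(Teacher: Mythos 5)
Your proof is correct and follows exactly the route the paper indicates: it invokes Proposition~\ref{prop:gen-quasi-pro}(2) to replace the restriction $\A(\R)^X$ by the plexification $(\A/\A_X)(\R)$, and then matches the Whitney-type alternating sum from \cite[Theorem~2.4]{St07} with the trivial-group specialization $\chi^{\Z/1\Z}_{\A/\A_X}(t)=f^1_{\A/\A_X}(t)$. The paper only sketches this (citing \cite{CW12} and \cite{A96}), so your write-up, including the verification that the contraction list is torsion-free and the multiset-versus-set cancellation $\sum_{j\ge 1}\binom{m}{j}(-1)^j=-1$, is a legitimate filling-in of the same argument.
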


Now we give an arrangement theoretic realization for $\A(\R)$.
\begin{proposition}
\label{prop:no-loops}
Given a pair $(\A, \Gamma)$, if $\A^{\tor} = \emptyset$ then $\A(\R)$ is an integral arrangement, and also can be realized as a restriction of $\scL(\R)$ where $\scL$ is a finite list in some $\Z^\ell$.
\end{proposition}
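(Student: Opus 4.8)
The plan is to treat the two assertions---that $\A(\R)$ is integral and that it is a restriction of some $\scL(\R)$---together, deriving both from Lemma~\ref{lem:useful-construction} and Proposition~\ref{prop:gen-quasi-pro}. First I would pin down the ambient space. Since $\R$ is torsion-free, every $\varphi\in\Hom(\Gamma,\R)$ annihilates $\Gamma_{\tor}$, so $\Hom(\Gamma,\R)\cong\Hom(\Gamma/\Gamma_{\tor},\R)\cong\R^r$ with $r=r_\Gamma$. Writing $\alpha\in\A$ via its free part $\alpha_{\free}\in\Z^r$, the identification $\varphi\leftrightarrow x\in\R^r$ gives $\varphi(\alpha)=(\alpha_{\free},x)$, so $H_{\alpha,\R}$ is the integral hyperplane orthogonal to $\alpha_{\free}$. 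The hypothesis $\A^{\tor}=\emptyset$ ensures $\alpha_{\free}\neq0$ for all $\alpha$ (an element of $\Gamma$ is torsion exactly when its free part vanishes), so no $H_{\alpha,\R}$ degenerates to the whole space and $\A(\R)$ is a genuine integral arrangement.

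For the restriction statement, I would invoke Lemma~\ref{lem:useful-construction} to write $\A=\scL/Q$ with $Q=\{q_1,\dots,q_s\}\subseteq\scL=\tilde{\A}\sqcup Q\subseteq\Z^{r+s}$ and $q_i=d_i\epsilon_{r+i}$; taking $\ell=r+s$, the list $\scL\subseteq\Z^{r+s}$ already has integral $\R$-plexification $\scL(\R)$. The key is to select the right flat. Set $X:=\bigcap_{i=1}^{s}H_{q_i,\R}=\{x\in\R^{r+s}\mid x_{r+1}=\cdots=x_{r+s}=0\}$, the coordinate subspace $\mathrm{span}_{\R}(\epsilon_1,\dots,\epsilon_r)$; being an intersection of members of $\scL(\R)$, it lies in $L_{\scL(\R)}$. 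Then $X^\perp=\mathrm{span}_{\R}(\epsilon_{r+1},\dots,\epsilon_{r+s})$ contains every $q_i$, whereas each representative in $\tilde{\A}$ has nonzero free part (again by $\A^{\tor}=\emptyset$) and hence lies off $X^\perp$; therefore $\scL_X=\scL\cap X^\perp=Q$ as multisets.

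Finally I would apply Proposition~\ref{prop:gen-quasi-pro}(2) to $\scL$ and $X$ to obtain $\scL(\R)^X=(\scL/\scL_X)(\R)=(\scL/Q)(\R)=\A(\R)$, where the ambient spaces match because $\Hom(\Z^{r+s}/\langle Q\rangle,\R)=X=\Hom(\Gamma,\R)$; this exhibits $\A(\R)$ as the restriction of $\scL(\R)$ to $X$, and since $X$ is a coordinate subspace the restriction is again integral, re-proving the first assertion. The main obstacle I anticipate is the bookkeeping of the previous paragraph: verifying that $\scL_X$ equals $Q$ \emph{exactly} as a multiset (so that the contraction $\scL/\scL_X$ is literally $\A$) and that $X$ is a legitimate element of $L_{\scL(\R)}$, which is precisely where the torsion-free hypothesis on $\A$ is used. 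I would also dispose of the degenerate case $s=0$ (i.e.\ $\Gamma$ free), where $Q=\emptyset$, $X=\R^r$, and $\A(\R)=\scL(\R)$ is its own trivial restriction.
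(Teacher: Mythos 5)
Your proposal is correct and follows essentially the same route as the paper: invoke Lemma~\ref{lem:useful-construction}, take $X=\bigcap_{q_i\in Q}H_{q_i,\R}$, use $\A^{\tor}=\emptyset$ to guarantee $\scL_X=Q$ exactly, and conclude via Proposition~\ref{prop:gen-quasi-pro}(2) that $\A(\R)=\scL(\R)^X$. You simply spell out in more detail the steps the paper leaves implicit (the identification $\Hom(\Gamma,\R)\cong\R^{r_\Gamma}$, the multiset equality $\scL_X=Q$, and the degenerate case $Q=\emptyset$).
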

\begin{proof} 
We use the notation as in Lemma \ref{lem:useful-construction}. 
Set $X:=\bigcap_{q_i \in Q}H_{q_i, \R} \in L_{\scL(\R)}$, then $Q=\scL\cap X^\perp=\scL_X$. 
The condition $\A^{\tor} = \emptyset$ is crucial, otherwise it may happen that $Q\subsetneq\scL_X$.
By Proposition \ref{prop:gen-quasi-pro}, $\A(\R) = (\scL/\scL_X)(\R) =\scL(\R)^X$. 
This means that $\A(\R)$ is the restriction of $\scL(\R)$ to $X$, and also can be identified with an integral arrangement in $\R^{r_\Gamma}$.
\end{proof}

Next, we prove an important property of $\chi^{\quasi}_{\A}(q)$, which is the main theorem of this section.
\begin{theorem}
\label{thm:quasi-general}
Let $(\A, \Gamma)$ be any pair. Then
$$\chi_{\A(\R)}(t)=f^1_{\A\smallsetminus \A^{\tor}}(t).$$ 
\end{theorem}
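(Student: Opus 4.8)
The plan is to peel off the torsion part of $\A$ and reduce the general statement to the torsion-free case already prepared by Proposition \ref{prop:no-loops} and Lemma \ref{lem:CP-restriction}. Write $\A^\circ := \A\smallsetminus\A^{\tor}$ for the sublist of non-torsion elements. The first step is to check that replacing $\A$ by $\A^\circ$ does not change the real arrangement. Indeed, for a torsion element $\alpha\in\Gamma_{\tor}$, say $d\cdot\alpha=0$, and any $\varphi\in\Hom(\Gamma,\R)$ we have $d\,\varphi(\alpha)=\varphi(d\cdot\alpha)=0$, whence $\varphi(\alpha)=0$ since $\R$ is torsion-free; thus $H_{\alpha,\R}=\Hom(\Gamma,\R)$ is the whole ambient space. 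Because such degenerate ``hyperplanes'' are exactly the ambient space, which may be adjoined without affecting the intersection poset (the remark made just before Proposition \ref{prop:gen-quasi-pro}), we get $L_{\A(\R)}=L_{\A^\circ(\R)}$ and therefore $\chi_{\A(\R)}(t)=\chi_{\A^\circ(\R)}(t)$. It then suffices to handle the torsion-free list $\A^\circ$.

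For $\A^\circ$ the second step is to realize $\A^\circ(\R)$ as a restriction. Applying Lemma \ref{lem:useful-construction} to the pair $(\A^\circ,\Gamma)$ yields lists $Q\subseteq\scL\subseteq\Z^{r+s}$ with $r_Q=s$ and $\A^\circ=\scL/Q$; moreover $\scL$ consists of nonzero vectors, since each representative of a non-torsion coset lies outside $\langle Q\rangle$. As $(\A^\circ)^{\tor}=\emptyset$, Proposition \ref{prop:no-loops} applies: with $X:=\bigcap_{q_i\in Q}H_{q_i,\R}$ we have $Q=\scL_X$ and $\A^\circ(\R)=\scL(\R)^X$. Now Lemma \ref{lem:CP-restriction}, applied to $\scL$ in $\Z^{r+s}$, gives $\chi_{\scL(\R)^X}(t)=f^1_{\scL/\scL_X}(t)=f^1_{\scL/Q}(t)=f^1_{\A^\circ}(t)$, the last equality because $\scL/Q=\A^\circ$ on the nose. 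Combining with the first step yields $\chi_{\A(\R)}(t)=\chi_{\A^\circ(\R)}(t)=f^1_{\A\smallsetminus\A^{\tor}}(t)$. (Alternatively, one could argue directly: by Proposition \ref{prop:gen-quasi-gcd2}(1), $f^1_{\A^\circ}(t)=\chi_{\A^\circ}^{\Z/1\Z}(t)$, and since $m(\scS;\Z/1\Z)=\#\Hom((\Gamma/\langle\scS\rangle)_{\tor},0)=1$ the $G$-Tutte polynomial degenerates to the ordinary one, so $\chi_{\A^\circ}^{\Z/1\Z}(t)=\sum_{\scS\subseteq\A^\circ}(-1)^{\#\scS}t^{r_\Gamma-r_\scS}$, which equals $\chi_{\A^\circ(\R)}(t)$ by Whitney's theorem.)

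The step I expect to require the most care is the torsion reduction. One must fix the convention for $\chi_{\A(\R)}(t)$ when a defining ``hyperplane'' degenerates to all of $\Hom(\Gamma,\R)$, and confirm that such degenerate members leave the intersection poset—and hence the M\"obius-function computation defining the characteristic polynomial—untouched; the insistence that $L_{\A(\R)}$ be a \emph{set} and that the ambient space may be freely adjoined is precisely what makes this clean. A secondary point to verify is that Lemma \ref{lem:useful-construction} and Proposition \ref{prop:no-loops} are invoked for the sublist $\A^\circ$ rather than for $\A$, so that the hypothesis $(\A^\circ)^{\tor}=\emptyset$ genuinely holds and the identification $Q=\scL_X$ is valid; this is exactly where the observation in Proposition \ref{prop:no-loops} that torsion could force $Q\subsetneq\scL_X$ must be respected.
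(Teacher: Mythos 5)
Your proof is correct and follows essentially the same route as the paper: reduce to the torsion-free sublist $\A\smallsetminus\A^{\tor}$ by noting the degenerate $\R$-hyperplanes do not change the intersection poset, then invoke Proposition \ref{prop:no-loops} and Lemma \ref{lem:CP-restriction}. You have merely spelled out the details (why $H_{\alpha,\R}$ is the whole ambient space for torsion $\alpha$) that the paper's two-line proof leaves implicit.
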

 \begin{proof}
If $\A^{\tor} = \emptyset$, we apply Proposition \ref{prop:no-loops} and Lemma \ref{lem:CP-restriction}. 
If $\A^{\tor} \ne \emptyset$, note that $\A(\R)$ and $(\A\smallsetminus \A^{\tor})(\R)$ have the same intersection poset. 
\end{proof}

The $1$-constituent $f^1_{\A}(t)$ sometimes can be regarded as the chromatic polynomial defined on a graph, for example, via connection with graphical arrangements discussed in \S\ref{subsec:ChQP}. 
It is well known (and easy to show) that the graphic chromatic polynomial is identical to 0 if the graph contains some (graph theoretic) loop. Recall from \cite[\S 4.4]{DM13} that an element $\alpha\in\A$ is called a \emph{loop} (resp. \emph{coloop}) 
if $\alpha\in\Gamma_{\tor}$ (resp. $r_\A=r_{\A'}+1$). 
An element $\alpha\in\A$ that is neither a loop nor a coloop is said to be \emph{proper}. 
We will prove in the proposition below that a similar result holds for $f^1_{\A}(t)$.

\begin{proposition}
\label{prop:non-torsion}
Let $(\A, \Gamma)$ be any pair with $\A^{\tor} \ne \emptyset$. 
Then $$f^1_{\A}(t)=0.$$ 
\end{proposition}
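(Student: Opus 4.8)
The plan is to identify $f^1_\A(t)$ with a classical characteristic polynomial and then exploit the presence of a torsion element as a matroid loop to force vanishing. By Proposition \ref{prop:gen-quasi-gcd2}(1) we have $f^1_\A(t) = \chi_\A^{\Z/1\Z}(t)$, so it suffices to show that the $G$-characteristic polynomial vanishes when $G = \Z/1\Z$ is the trivial group.

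First I would unwind the multiplicities for the trivial group. Since $\Hom(H, \{0\})$ consists of a single (zero) homomorphism for every abelian group $H$, Definition \ref{def:multiplicit} gives $m(\scS; \Z/1\Z) = 1$ for all $\scS \subseteq \A$. Hence $T_\A^{\Z/1\Z}(x,y)$ reduces to the classical Tutte polynomial $\sum_{\scS \subseteq \A}(x-1)^{r_\A - r_\scS}(y-1)^{\#\scS - r_\scS}$ attached to the matroid determined by the abelian-group rank function $\scS \mapsto r_\scS$, and correspondingly $\chi_\A^{\Z/1\Z}(t) = (-1)^{r_\A}\, t^{r_\Gamma - r_\A}\, T_\A^{\Z/1\Z}(1-t, 0)$ is its classical characteristic polynomial.

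Next I would exploit the hypothesis $\A^{\tor} \neq \emptyset$: fix $\alpha \in \A \cap \Gamma_{\tor}$. Because $\alpha$ is torsion, $\langle\alpha\rangle$ has rank $0$, and more generally $r_{\scS \cup \{\alpha\}} = r_\scS$ for every $\scS$ (the quotient $\langle \scS \cup \{\alpha\}\rangle/\langle\scS\rangle$ is a quotient of $\langle\alpha\rangle$, hence torsion), so $\alpha$ is a loop of the matroid. I would then run the standard sign-reversing involution on the defining sum of $T_\A^{\Z/1\Z}(1-t, 0)$: pair each $\scS$ with $\scS \triangle \{\alpha\}$. Since the rank is unchanged under this toggle while $\#\scS$ changes parity, the factor $(-t)^{r_\A - r_\scS}$ is preserved and the factor $(-1)^{\#\scS - r_\scS}$ (coming from $y - 1 = -1$ at $y = 0$) flips sign, so the two paired terms cancel. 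Therefore $T_\A^{\Z/1\Z}(1-t, 0) = 0$ and $f^1_\A(t) = \chi_\A^{\Z/1\Z}(t) = 0$.

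I expect no serious obstacle here; the one point requiring care is the book-keeping that a torsion element behaves as a matroid loop, i.e.\ $r_{\scS \cup \{\alpha\}} = r_\scS$, which underlies both the loop interpretation and the cancellation. Alternatively, one could bypass the involution by invoking the classical fact that a matroid with a loop has vanishing characteristic polynomial (equivalently, that a loop contributes a factor of $y$ to the Tutte polynomial, which vanishes at $y = 0$), or argue via Corollary \ref{cor:DC-const} that $f^1_{\A'}(t) = f^1_{\A''}(t)$ when $\alpha$ is a loop, so that the Deletion-Contraction formula yields $f^1_\A(t)=0$.
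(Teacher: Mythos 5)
Your proof is correct. One minor bookkeeping point: since $\A$ is a multiset, the involution $\scS\mapsto\scS\triangle\{\alpha\}$ should be understood as toggling one fixed occurrence of $\alpha$, but this is routine. Your route differs from the paper's official proof, which instead uses Corollary \ref{cor:DC-const} to reduce to the case $\A=\scF\sqcup\scT$ with $\scF$ consisting of coloops and $\scT\neq\emptyset$ of loops, and then concludes via Proposition \ref{prop:gen-quasi-gcd2}(1) (the Tutte polynomial of such a matroid is $x^{\#\scF}y^{\#\scT}$, which vanishes at $y=0$). However, your main argument --- fixing $\alpha\in\A^{\tor}$, noting that $r_{\scS\cup\{\alpha\}}=r_\scS$, and cancelling the sublists containing $\alpha$ against those not containing it --- is precisely the alternative sketched in Remark \ref{rem:second-proof}, which the paper itself calls ``a neater proof.'' What your approach buys is self-containedness: it needs only the definition of the $G$-Tutte polynomial for the trivial group (where all multiplicities $m(\scS;\Z/1\Z)$ equal $1$) together with the loop observation, whereas the paper's reduction leans on the deletion-contraction machinery of Theorem \ref{thm:DC-formula} imported from \cite{LTY17}. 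Your closing alternative, showing $f^1_{\A'}(t)=f^1_{\A''}(t)$ when $\alpha$ is a loop and invoking Corollary \ref{cor:DC-const}, is also valid and is the variant closest in spirit to the paper's own argument.
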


\begin{proof}
Use Corollary \ref{cor:DC-const} (viewing as $k=1$) to reduce the problem to the case that $\A =  \scF \sqcup \scT$ with $\scF$ and $\scT \ne \emptyset$ consist of only coloops and loops, respectively. 
Then apply Proposition \ref{prop:gen-quasi-gcd2}(1).
\end{proof}

\begin{remark}
\label{rem:second-proof}
There is a neater proof: fix $\alpha\in\A^{\tor}$ and break $f^1_{\A}(t)$ into two summations with one of them is taken over $\scB \subseteq \A, \alpha\in \scB$.

\end{remark}

\begin{example}
\label{ex:caculation}
Let $\Gamma=\Z^2\oplus\Z/4\Z$, $\A=\{\alpha, \beta, \gamma\}\subsetneq\Gamma$ with 
$\alpha=(2, 2, \overline{1})$, $\beta=(0, 2, \overline{3})$ and $ \gamma=(0, 0, \overline{3})$. 
Then $\rho_\A=\rho_{\A\smallsetminus\{\gamma\}}=8$, and 
$$
\chi^{\quasi}_{\A}(q) = 
\begin{cases}
0 \quad\mbox{ if $\gcd(q,8)=1$}, \\
q^2  \quad\mbox{ if $\gcd(q,8)=2$}, \\
3q^2-4q+4 \quad\mbox{ if $\gcd(q,8)=4$}, \\
3q^2-12q+12  \quad\mbox{ if $\gcd(q,8)=8$}.
\end{cases}
$$
$$
\chi^{\quasi}_{\A\smallsetminus\{\gamma\}}(q) = 
\begin{cases}
q^2-2q+1 \quad\mbox{ if $\gcd(q,8)=1$}, \\
2q^2-4q+4  \quad\mbox{ if $\gcd(q,8)=2$}, \\
4q^2-8q+8  \quad\mbox{ if $\gcd(q,8)=4$}, \\
4q^2-16q+16 \quad\mbox{ if $\gcd(q,8)=8$}.
\end{cases}
$$
Note that $(\A\smallsetminus\{\gamma\})(\R)=\scL(\R)^X$, where $\scL(\R)=\{\{2x+2y+z=0\}, \{2y+3z=0\}, \{z=0\}\}\subseteq\R^3$ and $X=\{z=0\}$, which can also be identified with the 
integral arrangement $\{\{x+y=0\}, \{y=0\}\}$ in $\R^2$. 
In either way, 
$$\chi_{(\A\smallsetminus\{\gamma\})(\R)}(t)=f^1_{\A\smallsetminus\{\gamma\}}(t)=t^2-2t+1.$$ 
 \end{example}

\noindent
\textbf{Acknowledgements:} 
 The author is greatly indebted to Professor Masahiko Yoshinaga for many stimulating conversations, helpful suggestions and for his active interest in the publication of the paper.
He also gratefully acknowledges the support of the scholarship program of 
the Japanese Ministry of Education, Culture, Sports, Science, and Technology 
(MEXT) under grant number 142506. 
 \bibliographystyle{alpha} 
\bibliography{references}

\end{document}